\newtheorem{thm}{{\bf Theorem}}
\newtheorem{cor}[thm]{{\bf Corollary}}
\newtheorem{prop}[thm]{{\bf Proposition}}
\numberwithin{equation}{section}
\newenvironment{psmallmatrix}
  {\left(\begin{smallmatrix}}
  {\end{smallmatrix}\right)}
\def\CC{{\mathfrak C}}
\def\FF{{\mathfrak F}}  
\def\F{{\mathbb F}}
\def\Z{{\mathbb Z}}
\def\a{{\alpha}}
\def\b{{\beta}}
\def\c{{\gamma}}
\begin{document} 

\title{Ordered groups as a tensor category}

\date{\today}

\author{Dale Rolfsen}
\address{Department of Mathematics\\
The University of British Columbia\\
Vancouver, BC, Canada V6T 1Z2}
\email{rolfsen@math.ubc.ca}

\begin{abstract} 
It is a classical theorem that the free product of ordered groups is orderable.  In this note we show that, using a method of G. Bergman, an ordering of the free product can be constructed in a functorial manner, in the category of ordered groups and order-preserving homomorphisms.   With this
functor interpreted as a tensor product this category becomes a tensor (or monoidal) category.   Moreover, if $O(G)$ denotes the space of orderings of the group $G$ with the natural topology, then for fixed groups $F$ and $G$ our construction can be considered a function $O(F) \times O(G) \to O(F * G)$.  We show that this function is continuous and injective.  Similar results hold for left-ordered groups.
\end{abstract}

\thanks{The author gratefully acknowledges the support of a grant from the Canadian Natural Sciences and Engineering Research Council.  I also thank George Bergman, Adam Clay and Christian Kassel for very helpful comments on earlier versions of this paper.  Thanks also to Victoria Lebed and Arnaud Mortier for providing me with an English translation of  \cite{Vinogradov49}}

\maketitle

\section{Introduction}

An ordered group $(G, <)$ is a group $G$ together with a strict total ordering $<$ of its elements such that $x < y$ implies $xz < yz$ and $zx < zy$ for all $x, y, z \in G$.  If such an ordering exists, $G$ is said to be orderable.  If $(F, <_F)$ and $(G, <_G)$ are ordered groups, a homomorphism
$\phi: F \to G$ is said to be order-preserving (relative to $<_F, <_G$) if for all $x, y \in F$, 
$x <_F y$ implies $\phi(x) <_G \phi(y)$.  Note that the reverse implication follows, and that such a $\phi$ is necessarily injective.

A theorem of Vinogradov \cite{Vinogradov49} asserts that if $F$ and $G$ are orderable groups, then the free product $F*G$ (sometimes called the coproduct, as in \cite{Bergman90})  is orderable.  Other proofs of this can be found in \cite{Johnson68}, \cite{Passman85} and \cite{Bergman90}, and a generalization in \cite{Chiswell12}. 
A proof given in \cite{MR77} was unfortunately found to have a gap, as discussed in \cite{HM94} and \cite{Chiswell14}.  Yet another proof, in \cite{Revesz87}, was also shown to have a gap \cite{Medvedev91}.

In this note, we show that a version of Bergman's construction in \cite{Bergman90} is functorial in the following sense.  Suppose $(F_i, <_{F_i}), i= 0, 1,$ are ordered groups.  We will construct an ordering $\prec$ of $F_0*F_1$, so that $(F_0*F_1, \prec)$ is an ordered group, and write  
$${\mathfrak F }((F_0, <_{F_0}) , (F_1, <_{F_1})) := (F_0*F_1, \prec).$$
  Theorem \ref{main} shows that $\mathfrak F$ is a (bi-)functor in the category $\mathfrak{C}$ of ordered groups and order-preserving homomorphisms.  We will show in Section \ref{tensor} that this functor gives 
  $\CC$ the structure of a  tensor, or monoidal, category. 
  
\begin{thm}\label{main}
Suppose that $(F_i, <_{F_i}), i = 0, 1,$  are ordered groups.  Then the ordered group 
$(F_0*F_1, \prec_F) = {\mathfrak F }((F_0, <_{F_0}), (F_1, <_{F_1}))$ has the following properties:

(1) $\prec_F$ extends the given orderings of $F_i$ as subgroups of $F_0 * F_1$ and 

(2) if $(G_i, <_{G_i}), i = 0, 1,$ are ordered groups and $(G_0 *G_1, \prec_G) = {\mathfrak F }((G_0, <_{G_0}), (G_1, <_{G_1}))$ and if
$\phi_i: F_i \to G_i, i = 0, 1,$ are homomorphisms which preserve the given orderings of $F_i$ and $G_i$, then 
the homomorphism $\phi_0* \phi_1 : F_0*F_1 \to G_0*G_1$ is order-preserving, relative to $\prec_F, \prec_G$.
\end{thm}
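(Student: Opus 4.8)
The plan is to work throughout with positive cones rather than with the orderings directly, since an order-preserving homomorphism is precisely a homomorphism whose underlying map carries positive elements to positive elements. Write $P_F = \{w \in F_0*F_1 : 1 \prec_F w\}$ and $P_G = \{w \in G_0*G_1 : 1 \prec_G w\}$ for the positive cones determined by the construction $\mathfrak F$. Left-invariance of $\prec_F$ (part of the data that makes $(F_0*F_1,\prec_F)$ an ordered group, and hence available from the construction) lets me replace the relation $x \prec_F y$ by the single membership $x^{-1}y \in P_F$, and similarly for $\prec_G$. Thus property (2) reduces to the containment $(\phi_0*\phi_1)(P_F) \subseteq P_G$, and property (1) reduces to the assertion that $P_F \cap F_i$ is exactly the positive cone of $<_{F_i}$ for $i=0,1$.

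Property (1) I expect to fall out directly from the construction. An element of a single factor $F_i$, regarded inside $F_0*F_1$, is already a one-syllable reduced word, and on such words Bergman's sign assignment is normalized so as to reproduce the sign of the syllable with respect to $<_{F_i}$. So I would simply trace the definition of $\prec_F$ through on length-one words and read off the claim; the only thing to verify is that the construction leaves single-syllable elements undisturbed, which is exactly the normalization built into the method.

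For property (2) I would first record two structural facts about $\Phi := \phi_0*\phi_1$. The universal property of the free product yields $\Phi$ as a homomorphism; since each $\phi_i$ is injective (being order-preserving), the standard fact that a free product of injections is injective makes $\Phi$ injective as well. Moreover $\Phi$ carries a reduced word $x_1 x_2 \cdots x_n$, with syllables alternating between the two factors, to $\Phi(x_1)\Phi(x_2)\cdots\Phi(x_n)$, where $\Phi(x_j)=\phi_{i(j)}(x_j)$ for the index $i(j)$ of the factor containing $x_j$; because each $\phi_i$ sends nontrivial syllables to nontrivial syllables and respects which factor a syllable inhabits, this is again a reduced word. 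Hence $\Phi$ preserves the combinatorial \emph{pattern} of a reduced word. The heart of the matter is then to show that whether the construction declares a reduced word $w$ positive depends only on that pattern together with the order relations, within each factor, among the syllables of $w$ and the identity $1$ — all of which the $\phi_i$ preserve. Granting this, $w \in P_F$ forces $\Phi(w) \in P_G$, which is the required containment.

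The step I expect to be the main obstacle is precisely the locality claim just made: that Bergman's positivity rule uses only the abstract order relations among the finitely many syllables occurring in $w$ (and their comparisons to $1$), and never any auxiliary data — such as a chosen real realization, a successor element, or the ambient ``size'' of a factor — that an order-preserving but non-surjective $\phi_i$ could fail to transport. The delicacy is that $\Phi$ need not be onto, so the image word $\Phi(w)$ lives inside a possibly much sparser pair of factors; I must therefore check that every comparison the construction performs is a within-factor, syllable-to-syllable comparison (or a comparison to $1$), so that its outcome is the same whether computed in $F_i$ or, after applying $\phi_i$, in $G_i$. Once the construction is presented in a form making this locality manifest, functoriality is immediate, and the remaining work lies entirely in exhibiting $\mathfrak F$ in such a form.
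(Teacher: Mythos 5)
Your outline has the right overall shape, but it stops exactly where the proof has to begin: the ``locality claim'' you defer --- that Bergman's positivity rule for a reduced word $w$ depends only on data that $\phi_0$ and $\phi_1$ transport --- \emph{is} the content of part (2), and you neither formulate it precisely enough nor prove it. As stated it is in fact too weak: positivity of $w$ is read off from entries of the matrix $\rho(w)$, which lie in the group ring $\Z(F_0\times F_1)$ and involve \emph{products} of the syllables of $w$; so the comparisons the construction performs are not syllable-to-syllable but take place in the subgroups generated by the syllables, and in the lexicographically ordered product $F_0\times F_1$. The paper closes this gap by an algebraic mechanism your plan never reaches: an order-preserving (hence injective) pair $\phi_0,\phi_1$ induces a ring homomorphism $\Z(F_0\times F_1)\to\Z(G_0\times G_1)$ carrying positive group-ring elements to positive ones (injectivity prevents distinct group terms from colliding, and order-preservation keeps the leading term leading with the same integer coefficient); this extends to $M_2(R_F[t])\to M_2(R_G[t])$ and fits into a commutative square with the representations $\rho$, whence $\rho(y)-\rho(x)$ positive forces $\rho\bigl((\phi_0*\phi_1)(y)\bigr)-\rho\bigl((\phi_0*\phi_1)(x)\bigr)$ positive. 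Your observations that $\phi_0*\phi_1$ is injective and preserves reduced-word patterns are true but do not drive the argument, and no combinatorial analysis of reduced words is needed.

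Two smaller points. You take left-invariance of $\prec_F$ as ``available from the construction,'' but the paper must prove it: it holds because every $\rho(z)$ has constant term a diagonal matrix whose entries are positive in the ordered ring $R$, and multiplication by such a matrix preserves positivity in $M_2(R[t])$. And your part (1) is only a promissory note, though the needed check is a one-liner: $\rho(f_0')-\rho(f_0)=\begin{psmallmatrix} f_0'-f_0 & * \\ 0 & 0\end{psmallmatrix}$, whose first entry is positive in $R$ exactly when $f_0<_{F_0}f_0'$, with the analogous computation for $F_1$ using the chosen ordering of matrix positions.
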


In Section \ref{arbitrary index}, Theorem \ref{main} will be extended to free products of an arbitrary, possibly infinite, collection of ordered groups.  We will typically use multiplicative notation for groups and use $1$ to denote the identity element, though additive groups are also considered, with 0 as identity element.  We may also use $1$ to denote the unit of a ring (all rings we consider are assumed to have a unit), as well as the natural number.

Many of our results could have been proven using the original construction of Vinogradov.  Like Bergman's, his proof involves embedding a free product of groups into a ring of matrices.  Vinogradov's matrices are infinite dimensional upper triangular matrices, whereas Bergman's are 2 by 2 matrices with polynomial entries, a useful simplification.

%
%

\section{Embedding free products in matrix rings}\label{construction}
We use an observation of Bergman which generalizes the fact that the matrices 
$\begin{psmallmatrix} 1 & t \\ 0 & 1 \end{psmallmatrix}$ and $\begin{psmallmatrix} 1 & 0 \\ t & 1 \end{psmallmatrix}$ freely
generate a free subgroup of the multiplicative group of invertible $2 \times 2$ matrices with entries in the polynomial ring $\Z[t]$.

Consider a ring $R$ without zero divisors and let $F$ and $G$ be multiplicative groups of nonzero elements of $R$.  Let $M_2(R[t])$ be the ring of $2 \times 2$ matrices with entries in the polynomial ring $R[t]$.  Then one can embed $F$ in $M_2(R[t])$ by $f \mapsto \begin{psmallmatrix} f & 0 \\ 0 & 1 \end{psmallmatrix}$.  But we can conjugate that by $\begin{psmallmatrix} 1 & t \\ 0 & 1 \end{psmallmatrix}$ to get a different embedding which has a highest degree in the upper right corner when $f \ne 1$:
$$ \rho(f) =
 \begin{pmatrix} 1 & -t \\ 0 & 1 \end{pmatrix} \begin{pmatrix} f & 0 \\ 0 & 1 \end{pmatrix}
\begin{pmatrix} 1 & t \\ 0 & 1 \end{pmatrix} = \begin{pmatrix} f & (f - 1)t \\ 0 & 1 \end{pmatrix}.
$$
Similarly we embed $G$ by $$\rho(g) = \begin{pmatrix} 1 & 0 \\ (g-1)t & g \end{pmatrix}.$$  This then defines a 
multiplicative homomorphism $\rho : F * G \to M_2(R[t])$, which Bergman observes to be a faithful representation.

\begin{prop}[\cite{Bergman90}, Corollary 12]\label{injective}
With the assumptions stated in the preceding paragraph, $\rho : F*G \to M_2(R[t])$ is injective.
\end{prop}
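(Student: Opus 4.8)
The plan is to prove injectivity of $\rho$ by showing that any nontrivial reduced word in $F*G$ maps to a matrix that is not the identity. Recall that every element of $F*G$ can be written uniquely as a reduced word $w = a_1 a_2 \cdots a_n$, where the factors $a_k$ come alternately from $F \setminus \{1\}$ and $G \setminus \{1\}$. I would show that for such a word of length $n \geq 1$, the image $\rho(w) = \rho(a_1)\rho(a_2)\cdots\rho(a_n)$ is visibly distinct from the identity matrix, which suffices since $\rho$ is a homomorphism and its kernel is therefore trivial.

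The key technical device is to track the degrees in $t$ of the four polynomial entries of the partial products, as suggested by the remark in the preceding paragraph that $\rho(f)$ ``has a highest degree in the upper right corner when $f \ne 1$.'' Concretely, I would set up an induction on the word length $n$, maintaining a degree invariant for the product matrix $\rho(a_1)\cdots\rho(a_n)$ that depends on whether the word begins with an $F$-factor or a $G$-factor, and likewise on the last factor. For instance, multiplying on the right by a generator $\rho(f) = \begin{psmallmatrix} f & (f-1)t \\ 0 & 1 \end{psmallmatrix}$ with $f \neq 1$ strictly raises the degree of the top-right entry relative to the other entries in the relevant column, and multiplying by a $\rho(g)$ factor does the analogous thing in the bottom-left entry; because $R$ has no zero divisors, the leading coefficients cannot cancel. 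The invariant should guarantee that after $n$ alternating multiplications the entry of maximal degree has degree at least $n$ (or some strictly increasing function of $n$), which in particular is positive for $n \geq 1$, forcing $\rho(w) \neq I$.

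I expect the main obstacle to be formulating the degree invariant precisely enough that it is preserved under right-multiplication by either type of generator, handling the two cases (next factor in $F$ versus in $G$) symmetrically, and verifying that the leading terms genuinely survive. The delicate point is that the off-diagonal entry whose degree we are tracking gets multiplied by a diagonal entry of the new generator (which is a nonzero element of $R$, hence does not lower the degree since $R$ has no zero divisors), while the competing contribution comes from a strictly lower-degree entry; one must confirm no accidental cancellation occurs at the top degree. Since this is exactly the content of Bergman's Corollary~12 (Proposition~\ref{injective}), I would either reproduce the degree bookkeeping in full or, more economically, cite \cite{Bergman90} directly and record only the statement, since the construction and faithfulness are established there.
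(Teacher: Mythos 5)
Your plan is sound and would work, but it organizes the degree bookkeeping differently from the paper. You propose to track the degrees of all four entries of the partial products $\rho(a_1)\cdots\rho(a_k)$ by induction on $k$, maintaining an invariant that depends on whether the first and last letters lie in $F$ or $G$; this is essentially Bergman's original argument, and carried out carefully it yields the stronger conclusion that the maximal-degree entry of $\rho(w)$ has degree equal to the number of letters of $w$, with its position determined by the first and last letters. The paper instead gives a ping-pong argument: it partitions the column vectors $\begin{psmallmatrix} A(t) \\ B(t) \end{psmallmatrix}$ over $R[t]$ into three sets $V_1, V_2, V_3$ according to whether $\deg A > \deg B$, $\deg A < \deg B$, or the degrees are equal, applies the product matrix to the single test vector $\begin{psmallmatrix}1\\1\end{psmallmatrix}\in V_3$, and observes that each generator $\rho(f)$ sends $V_2\cup V_3$ into $V_1$ while each $\rho(g)$ sends $V_1\cup V_3$ into $V_2$; the image therefore lands in $V_1\cup V_2$, so the product cannot be the identity. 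The advantage of the paper's version is that only two polynomial degrees need be compared at each step, and the case analysis on first/last letters largely disappears; the advantage of yours is the more explicit structural information about $\rho(w)$ itself. The one thing you must still supply to make your sketch a proof is the precise four-entry invariant (and the check, in both cases of the alternation, that the top-degree coefficient is a product of nonzero elements of $R$ and hence survives, which is where the no-zero-divisors hypothesis enters) --- you correctly identify this as the delicate point, but it is the entire content of the argument, so simply citing \cite{Bergman90} for it, as you suggest as an alternative, is the honest fallback.
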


\begin{proof}
Here is a sketch of a proof using a ping-pong argument.  Let $f_kg_kf_{k-1} \cdots g_2f_1g_1 \ne 1$ be a reduced word in $F*G$, with $f_i \in F, g_i \in G$ nonidentity elements (except possibly the first and/or last).  Assume that $g_1 \ne 1$, the other case with $g_1 = 1, f_1 \ne 1$ being similar.
We need to show that the product of matrices $\rho(f_k)\rho(g_k) \cdots \rho(f_1)\rho(g_1)$ is not the identity matrix.  Consider the set $V$  of column vectors $\begin{psmallmatrix} A(t) \\ B(t) \end{psmallmatrix}$ with entries in $R[t]$ and partition that set into three parts $V = V_1 \sqcup V_2 \sqcup V_3$ according to their degrees as polynomials.  Take $V_1$ to be the set of such pairs with $deg A(t) > deg B(t), V_2$ the set with $deg A(t) < deg B(t)$ and  $V_3$ the set with equal degree. 

Apply $\rho(f_k)\rho(g_k) \cdots \rho(f_1)\rho(g_1)$ (on the left) to the vector $\begin{psmallmatrix} 1 \\ 1 \end{psmallmatrix} \in V_3$ and note that $\rho(g_1)$ sends $\begin{psmallmatrix} 1 \\ 1 \end{psmallmatrix}$ to 
$\begin{psmallmatrix} 1 \\ g_1 + (g_1 - 1)t \end{psmallmatrix}$ which belongs to $V_2$.  Then $\rho(f_1)$ sends this result into $V_1$, which is then sent to $V_2$ by $\rho(g_2)$, and so on.  The end result, after multiplying all the matrices, will be in $V_1$ or $V_2$, not $V_3$, and so the product cannot be the identity matrix. 
\end{proof}

\section{Constructing the ordering $\prec$}\label{constructing}

Suppose we are given two ordered groups, 
$(F_0, <_{F_0})$ and $(F_1, <_{F_1})$.  To embed them in a ring, we take $R$ to be the integral group ring of their direct product:
$R = \Z(F_0 \times F_1)$.  It is well-known that integral group rings of orderable groups have no zero divisors (see, for example, \cite{MR77} p. 155), so $R$ has no zero divisors..
Define a multiplicative homomorphism $\rho: F_0 * F_1 \to M_2(R[t])$ by

$$ \rho(f_0) =    \left(\begin{array}{cc}f_0 & (f_0 - 1)t \\0 & 1\end{array}\right) \quad
\rho(f_1) =  \left(\begin{array}{cc}1 & 0 \\ (f_1 - 1)t & f_1\end{array}\right), \quad f_i \in F_i.
$$

By proposition \ref{injective}, $\rho$ is faithful; it defines an isomorphism of $F_0 * F_1$ onto a multiplicative subgroup of $M_2(R[t])$.

We now turn to the task of defining the ordering, choosing a specific recipe among many described in \cite{Bergman90}.  First we order 
$F_0 \times F_1$ lexicographically, defining
$( f_0, f_1) < (f_0' , f_1')$ if $f_0 <_{F_0} f_0'$ or else $f_0 = f_0'$ and $f_1 <_{F_1} f_1'$.  Then the group ring $R = \Z(F_0 \times F_1)$ becomes an ordered ring\footnote{We understand an ordered ring $(R, <)$ to be an ordered group as an additive group, for which the positive cone $P = \{ r \in R \mid 0 < r\}$ is also closed under multiplication.}
by declaring a nonzero element
to be {\em positive} if the coefficient of the largest term (in the ordering  $<$ of $F_0 \times F_1$) is a positive integer.

Note that as a ring element, $f_0 \in F_0$, which can be considered an abbreviation of $1(f_0, 1) \in R,$ is considered positive even if $f_0 <_{F_0} 1$ and it  would be called ``negative'' as a group element.  In particular, the diagonal elements of the matrices displayed above are all positive.

Bergman then orders $M_2(R[t])$ as follows.  Choose ``an arbitrary order among the four `positions' in a $2 \times 2$ matrix, and call a nonzero element of this module `positive' if in the first position in which a nonzero coefficient occurs, the coefficient is in fact positive.''  He points out that ``The orderings of the positions can be the same for all $n$, but need not -- there is a lot of freedom here.''  To be definite, we will choose for all $n$ the $1,1$ position to be first, the $2,2$ position to be second, and the off-diagonal positions ordered third and fourth in some fixed way.  

Call an element $M$ of $M_2(R[t])$ positive if satisfies the following.  Expand $M = M_0 + M_1t + \cdots + M_kt^k$, where each $M_i$ belongs to $M_2(R)$.  Let $n \ge 0$ be the least integer such that $t^n$ has nonzero coefficient and
say $M$ is positive iff the first nonzero entry of $M_n$ is positive in the ordered ring $R$.

Finally, define an ordering of $F_0 * F_1$ by declaring that $x \prec y$ if and only if $\rho(y) - \rho(x)$ is positive in 
$M_2(R[t])$.

\section{Proof of Theorem \ref{main} and further properties of $\prec$}
First we'll argue that $(F_0*F_1, \prec)$ is an ordered group.  Clearly $\prec$ is a strict total ordering.  To check invariance under multiplication, first note that every element of $\rho(F_0*F_1)$ in $M_2(R[t])$, when expanded in powers of $t$, has constant term a diagonal matrix with positive entries.  (See the proof of Proposition \ref{order-homomorphism} below to be more precise.) The product of such a matrix, on either side, with a positive matrix in $M_2(R[t])$ will again be positive.  Thus, if $x, y, z \in F_0*F_1$, one has $x \prec y \iff
\rho(y) - \rho(x)$ is positive $ \iff  \rho(z)(\rho(y) - \rho(x)) = \rho(zy) - \rho(zx)$ is positive $ \iff zx \prec zy$.  Right invariance is proved similarly. Next we will show that the ordering $\prec$ extends the given orderings $<_{F_0}$ and $<_{F_1}$.
Suppose $f_0, f_0' \in F_0$ and $f _0 <_{F_0} f_0'$.  Then their images in $M_2(R[t])$ have
difference the matrix
$ \begin{psmallmatrix}f_0' - f_0 & * \\ 0 & 0\end{psmallmatrix}$, and noting that $f_0' - f_0$ is positive in $R$ we conclude $f_0 \prec f_0'$.  A similar argument shows that $\prec$ also extends $<_{F_1}$.

This establishes the first part of Theorem \ref{main}.  To prove part (2), note that $\phi_0 \times \phi_1$ preserves the lexicographic orderings $<_F, <_G$ of $F_0 \times F_1$ and $G_0 \times G_1$, respectively.  A homomorphism of groups naturally extends to a ring homomorphism of the integral group rings, and we see that if the group homomorphism preserves given orderings of the groups, then its extension takes ``positive'' elements of the group ring to positive elements.  Then $\phi_0 \times \phi_1$ defines a ring homomorphism $R_F \to R_G$, where $R_F = \Z(F_0\times F_1)$ and $R_G = \Z(G_0 \times G_1)$, which we will call $\phi_0 \times \phi_1$ again.  This extends to a ring homomorphism $R_F[t] \to R_G[t]$, and further induces an additive homomorphism $M_2(R_F[t]) \to M_2(R_G[t])$, which we will again call $\phi_0 \times \phi_1$.

The diagram 
$$\begin{CD}
F_0*F_1                                        @>{\rho}>>                                 M_2(R_F[t]) \\
@V{\phi_0* \phi_1}VV                                         @V{\phi_0 \times \phi_1}VV \\
G_0*G_1                                       @>{\rho}>>                               M_2(R_G[t]) 
\end{CD}$$
is commutative (we have used the same symbol $\rho$ for different maps, but defined analogously), and as already mentioned, $\phi_0 \times \phi_1$ takes positive matrix entries to positive matrix entries.  We now argue that $\phi_0 * \phi_1$ is order-preserving, relative to $\prec_F, \prec_G$.  Suppose $x, y \in F_0 * F_1$ 
and $x \prec_F y$. Then $\rho(y) - \rho(x)$ is positive, and therefore 
$\phi_0 \times \phi_1(\rho(y) - \rho(x))$ is positive in $M_2(R_G[t])$.  But 
$\phi_0 \times \phi_1(\rho(y) - \rho(x)) = \phi_0 \times \phi_1(\rho(y)) - \phi_0 \times \phi_1(\rho(x)) =
\rho(\phi_0 * \phi_1(y)) - \rho(\phi_0 * \phi_1(x)),$ and since this is positive, we conclude that 
$\phi_0 * \phi_1(x) \prec_G \phi_0 * \phi_1(y).$ \qed

\begin{cor}\label{automorphisms}
If $(F, <_F)$ and $(G, <_G)$ are ordered groups, then the ordered group $(F*G, \prec) := {\mathfrak F}((F, <_F), (G, <_G))$  has the properties that 
$\prec$ extends the orderings of $F$ and $G$, and for any automorphisms $\phi:F \to F$ and $\psi:G \to G$ which preserve the given orderings, the automorphism $\phi * \psi : F*G \to F*G$ preserves the ordering $\prec$.
\end{cor}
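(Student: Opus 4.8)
The plan is to obtain this corollary as an immediate specialization of Theorem \ref{main}, by taking the target ordered groups to coincide with the source ordered groups. First I would set $F_0 = G_0 = F$ carrying the ordering $<_F$ and $F_1 = G_1 = G$ carrying $<_G$. Since $\mathfrak F$ is a function of the input ordered groups alone, both applications of $\mathfrak F$ appearing in Theorem \ref{main} then return the single ordered group $(F*G, \prec)$, so the two decorated orderings $\prec_F$ and $\prec_G$ collapse to the one ordering $\prec$.

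With this identification in place, the assertion that $\prec$ extends the orderings of $F$ and $G$ is exactly part (1) of Theorem \ref{main}, and nothing further is required there. For the statement about automorphisms I would take $\phi_0 = \phi : F \to F$ and $\phi_1 = \psi : G \to G$. By hypothesis these preserve the given orderings of $F$ and $G$, which is precisely the hypothesis imposed on $\phi_i$ in part (2) of Theorem \ref{main} in the case $G_i = F_i$. Applying part (2) then yields that $\phi_0 * \phi_1 = \phi * \psi$ is order-preserving relative to $\prec$; that is, $x \prec y$ implies $(\phi * \psi)(x) \prec (\phi * \psi)(y)$.

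To upgrade ``order-preserving'' to the stronger ``preserves the ordering,'' I would finally invoke the observation recorded in the introduction that an order-preserving homomorphism is injective and satisfies the reverse implication as well. Since $\phi$ and $\psi$ are automorphisms, $\phi * \psi$ is an automorphism of $F*G$, hence a bijection; an order-preserving bijection of a totally ordered set onto itself automatically reflects order and is therefore an order automorphism, which is what is claimed.

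I do not expect any genuine obstacle here: the whole content is already packaged in Theorem \ref{main}. The one point deserving attention is the bookkeeping that $\mathfrak F$ produces literally the same ordered group on both sides of the diagram in Theorem \ref{main}, so that part (2) may legitimately be read with a single ordering $\prec$ replacing the pair $\prec_F, \prec_G$; this is immediate from the definition of $\mathfrak F$ as depending only on the input ordered groups.
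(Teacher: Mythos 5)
Your proposal is correct and matches the paper exactly: the paper offers no separate argument for this corollary, treating it as the immediate specialization of Theorem \ref{main} to the case $G_i = F_i$, $\phi_0 = \phi$, $\phi_1 = \psi$, just as you do. Your closing remark about order-preserving maps reflecting order is already covered by the observation in the introduction that the reverse implication follows automatically for order-preserving homomorphisms.
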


Following the terminology used in \cite{MR77}, we will call a homomorphism $\phi: F \to G$ of ordered groups $(F, <_F)$ and $(G, <_G)$ an {\em order-homomorphism} (relative to the given orderings) if  $x \le_F y$ implies $\phi(x) \le_G \phi(y)$ for all $x, y \in F$.  
Note that order-preserving homomorphisms are order-homomorphisms, and that order-homomorphisms need not be injective.  Indeed, the order-preserving homomorphisms are exactly the order-homomorphisms which are injective.
For example, using the lexicographic ordering of the direct product, the inclusions $F \to F \times G$ and $G \to F \times G$ are order-preserving, while the projection
$F \times G \to F$ is an order-homomorphism. But the projection $F \times G \to G$ will not be an order-homomorphism, if the groups are nontrivial.  

We'll see that our construction of $\prec$ has similar properties.  First note that part (1) of Theorem \ref{main} implies that the natural inclusion homomorphisms $F \to F * G$ and $G \to F * G$ are order-preserving.  There are also canonical maps $F * G \to F$, obtained by killing elements of $G$, and similarly $F * G \to G$.  They combine to define a canonical homomorphism $\alpha: F * G \to F \times G$.  Specifically, if 
$f_1g_1f_2 \cdots f_kg_k$ is an element of $F * G$, with $f_i \in F$ and $g_i \in G,$ then 
$\alpha(f_1g_1f_2 \cdots f_kg_k) = (f_1 \cdots f_k, g_1 \cdots g_k).$

\begin{prop}\label{order-homomorphism}
Suppose that $(F, <_F)$ and $(G, <_G)$ are ordered groups.  Then the canonical homomorphism 
$\alpha \colon F * G \to F \times G$ 
is an order-homomorphism, relative to the lexicographic ordering of $F \times G$ and the ordering $\prec$ for $F * G$.  
\end{prop}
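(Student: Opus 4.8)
The plan is to read off $\alpha(x)$ and $\alpha(y)$ directly from the constant terms (the coefficients of $t^0$) of the matrices $\rho(x)$ and $\rho(y)$, and then to compare these two elements using the very definition of $\prec$. First I would make precise the observation already invoked in the proof of Theorem~\ref{main}: for any word $w = f_1 g_1 \cdots f_k g_k \in F*G$, setting $t = 0$ turns each factor $\rho(f_i)$ and $\rho(g_i)$ into a diagonal matrix, and since diagonal matrices multiply entrywise, the $t^0$-coefficient of $\rho(w)$ is $\begin{psmallmatrix} f_1 \cdots f_k & 0 \\ 0 & g_1 \cdots g_k \end{psmallmatrix}$. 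Writing $\alpha(w) = (\alpha(w)_F, \alpha(w)_G)$, this constant term is the diagonal matrix whose $(1,1)$ entry is the monomial $(\alpha(w)_F, 1)$ and whose $(2,2)$ entry is the monomial $(1, \alpha(w)_G)$ of $R = \Z(F \times G)$. Both are basis elements with coefficient $+1$, which incidentally re-proves the positivity of these diagonal entries used in the proof of Theorem~\ref{main}.

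Next I would record the dictionary between positivity in $R$ and the orderings of the two factors. For $a \ne a'$ in $F$, the element $(a',1) - (a,1)$ of $R$ has exactly two terms, whose second coordinates both equal $1$; hence the lexicographic comparison on $F \times G$ reduces to comparing $a$ and $a'$ under $<_F$, and so $(a',1) - (a,1)$ is positive in $R$ precisely when $a <_F a'$. Symmetrically, $(1,b') - (1,b)$ is positive in $R$ precisely when $b <_G b'$.

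With these two facts the argument is a short case analysis on the constant term $M_0$ of $\rho(y) - \rho(x)$. It suffices to treat $x \prec y$, the case $x = y$ giving $\alpha(x) = \alpha(y)$. By the computation above, $M_0 = \begin{psmallmatrix} (\alpha(y)_F,1) - (\alpha(x)_F,1) & 0 \\ 0 & (1,\alpha(y)_G) - (1,\alpha(x)_G) \end{psmallmatrix}$. If $M_0 = 0$ then $\alpha(x)_F = \alpha(y)_F$ and $\alpha(x)_G = \alpha(y)_G$, so $\alpha(x) = \alpha(y)$. If $M_0 \ne 0$, then the least power of $t$ occurring in $\rho(y) - \rho(x)$ is $t^0$, so positivity is decided by the first nonzero entry of $M_0$ in the chosen position order $(1,1), (2,2), \dots$: a nonzero $(1,1)$ entry forces $(\alpha(y)_F,1) - (\alpha(x)_F,1)$ positive, hence $\alpha(x)_F <_F \alpha(y)_F$ and $\alpha(x) < \alpha(y)$ lexicographically; while a zero $(1,1)$ entry with nonzero $(2,2)$ entry forces $\alpha(x)_F = \alpha(y)_F$ together with $\alpha(x)_G <_G \alpha(y)_G$, again giving $\alpha(x) < \alpha(y)$. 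In every case $\alpha(x) \le \alpha(y)$, so $\alpha$ is an order-homomorphism.

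The one point needing care --- and essentially the only place the argument could go wrong --- is the matching of the two ``first-coordinate'' conventions in the case analysis: the position order must place $(1,1)$ before $(2,2)$, and the lexicographic order on $F \times G$ must test the $F$-coordinate first, so that both notions compare the same factor first. Because the constant terms are diagonal, the off-diagonal positions (ordered third and fourth) never participate in the comparison at $t^0$, so no information about the word structure of $x$ and $y$ beyond $\alpha$ is needed; everything past the conventions is bookkeeping.
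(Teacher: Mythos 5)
Your proof is correct and follows essentially the same route as the paper's: both arguments observe that the constant term of $\rho(w)$ is the diagonal matrix $\begin{psmallmatrix} \alpha(w)_F & 0 \\ 0 & \alpha(w)_G\end{psmallmatrix}$ and then read off the conclusion from the convention that the $(1,1)$ and $(2,2)$ positions are compared first, matching the lexicographic order on $F \times G$. The paper states this in two sentences; you have merely filled in the bookkeeping, all of which checks out.
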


\begin{proof} 
If $x \in F*G$ has image $\alpha(x) = (f, g) \in F\times G$, we observe that its image under the representation  
$\rho : F*G \to M_2(R[t])$ may be written $\rho(x) = \begin{psmallmatrix}f & 0 \\ 0 & g\end{psmallmatrix} +$ terms of positive degree.  The conclusion follows from our convention for ordering $M_2(R[t])$.
\end{proof}

A subset $C \subset G$ of an ordered group $(G, <_G)$ is said to be {\em convex} if the inequalities $c <_G g <_G c'$, with $c, c' \in C$ imply that 
$g \in C$.  For example, it is easy to see that if $(F, <_F)$ and $(G, <_G)$ are ordered groups and $\phi: F \to G$ is an order-homomorphism, then the kernel $K$ of $\phi$ is a convex subgroup of $F$.

\begin{cor}\label{convex kernel} 
The kernel of the homomorphism $\alpha \colon F * G \to F \times G$ is convex, relative to the ordering $\prec$ of $F * G$.
\end{cor}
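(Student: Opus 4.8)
The plan is to deduce this directly from Proposition~\ref{order-homomorphism} together with the general principle, recalled in the paragraph just before the statement, that the kernel of any order-homomorphism is a convex subgroup. Since Proposition~\ref{order-homomorphism} already establishes that $\alpha \colon F * G \to F \times G$ is an order-homomorphism (relative to $\prec$ on the source and the lexicographic ordering on the target), the corollary is precisely the instance of that principle applied to $\alpha$. So the only real task is to make the general argument explicit in this setting.

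To that end, I would set $K = \ker \alpha$ and suppose $c, c' \in K$ with $c \prec g \prec c'$ for some $g \in F * G$; the goal is to show $g \in K$, that is, that $\alpha(g)$ is the identity $1$ of $F \times G$. Because $\alpha$ is an order-homomorphism, the non-strict inequality $c \preceq g$ (which follows from $c \prec g$) yields $\alpha(c) \le \alpha(g)$ in the lexicographic ordering; since $c \in K$ we have $\alpha(c) = 1$, so $1 \le \alpha(g)$. Symmetrically, $g \preceq c'$ gives $\alpha(g) \le \alpha(c') = 1$. Combining these, $1 \le \alpha(g) \le 1$, and antisymmetry of the total ordering on $F \times G$ forces $\alpha(g) = 1$, hence $g \in K$, as required.

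I expect there to be essentially no obstacle here, since the result is a formal consequence of Proposition~\ref{order-homomorphism}. The only points meriting a moment of care are the passage between the strict inequalities $c \prec g \prec c'$ and the non-strict ones needed to invoke the order-homomorphism property, and the observation that the two one-sided bounds $1 \le \alpha(g)$ and $\alpha(g) \le 1$ must be combined via antisymmetry of the lexicographic order to conclude equality with the identity, rather than merely a one-sided inequality.
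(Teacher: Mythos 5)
Your proof is correct and matches the paper's approach: the corollary is stated there as an immediate consequence of Proposition~\ref{order-homomorphism} together with the general fact, recalled just before it, that the kernel of an order-homomorphism is convex. Your explicit verification of that general fact (passing to non-strict inequalities and using antisymmetry) is exactly the routine argument the paper leaves implicit.
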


The kernel of $\alpha \colon F * G \to F \times G$ is known to be a free subgroup of $F * G$, freely generated by commutators of the form $fgf^{-1}g^{-1},$ where $1 \ne f \in F$ and  $1 \ne g \in G.$
  
\begin{cor} \label{asymmetry} If $F * G$ is ordered by $\prec$, the canonical homomorphism $F * G \to F$ is an order-homomorphism, but $F * G \to G$ will not be an order-homomorphism, if the groups are nontrivial.
\end{cor}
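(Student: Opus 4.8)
The first assertion follows by composing order-homomorphisms. The canonical map $F * G \to F$ is precisely the composite of $\alpha \colon F * G \to F \times G$ with the projection $F \times G \to F$. By Proposition \ref{order-homomorphism} the map $\alpha$ is an order-homomorphism, and the projection $F \times G \to F$ was already observed to be an order-homomorphism for the lexicographic ordering. Since a composite of order-homomorphisms is visibly an order-homomorphism, the canonical map $F * G \to F$ is one as well.

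For the second assertion I will exhibit a pair of elements on which the canonical map $\beta \colon F * G \to G$ reverses order, exploiting the fact that in our recipe the $1,1$ position of a matrix is examined before the $2,2$ position. Since $F$ and $G$ are nontrivial, choose $f \in F$ and $g \in G$ with $f >_F 1$ and $g >_G 1$. I claim that $g \prec f$ in $F * G$, whereas $\beta(g) = g >_G 1 = \beta(f)$, the latter equality holding because $\beta$ kills $F$. This shows that $g \preceq f$ does not imply $\beta(g) \le_G \beta(f)$, so $\beta$ is not an order-homomorphism.

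To verify $g \prec f$ I would compute the difference
$$
\rho(f) - \rho(g) = \begin{psmallmatrix} f - 1 & (f-1)t \\ -(g-1)t & 1 - g \end{psmallmatrix},
$$
whose coefficient of $t^0$ is the diagonal matrix $\begin{psmallmatrix} f-1 & 0 \\ 0 & 1-g \end{psmallmatrix}$. This constant term is nonzero (as $f \ne 1$), so it is the relevant matrix $M_0$ in the ordering convention of Section \ref{constructing}. Reading its entries in the prescribed order, the first nonzero one is the $1,1$ entry $f - 1$; and since $f >_F 1$, the largest monomial of $f - 1$ in the ordered ring $R$ is $f$ with coefficient $+1$, so $f - 1$ is positive in $R$. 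Hence $\rho(f) - \rho(g)$ is positive and $g \prec f$, completing the argument.

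The only real point to watch is the interplay between the two position priorities: the same convention that places the $F$-entry $f-1$ in the decisive $1,1$ slot is what forces the projection to $F$ to respect $\prec$ while allowing the projection to $G$ to reverse it. The computation itself is routine once the correct pair $(g, f)$ is singled out, so I expect no genuine obstacle beyond making the sign bookkeeping in $R$ explicit.
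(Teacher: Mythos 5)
Your proof is correct and follows essentially the same route as the paper: both exhibit a pair of elements whose order under $\prec$ is dictated by the $F$-factor while their images in $G$ are ordered the other way (the paper uses $fg \prec f'g'$ with $g' <_G g$, you use $g \prec f$ with $g >_G 1 = \beta(f)$). Your direct matrix verification of $g \prec f$ is a slightly more explicit justification than the paper's one-line assertion, but it is the same underlying idea.
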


Indeed, if $f <_F f'$ in $F$ while $g' <_G g$ in $G$, we have, as elements of $F * G$ the inequality $fg \prec f'g'$.  If the canonical map $F*G \to G$ were an order-homomorphism, we'd conclude $g <_G g'$, a contradiction.  The asymmetry exposed by this corollary cannot be corrected, as the following observation shows.  We will not need it, and leave the proof to the interested reader.

\begin{prop}\label{asymmetry_necessary}
If $F$ and $G$ are nontrivial ordered groups, then there is no ordering of $F*G$ for which both of the  canonical homomorphisms  $F * G \to F$ and  $F * G \to G$ are order-homomorphisms.  
\end{prop}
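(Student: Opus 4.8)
The plan is to argue by contradiction, exploiting the observation recorded in the discussion preceding Corollary \ref{convex kernel}---that the kernel of an order-homomorphism is a convex subgroup---together with the classical fact that the convex subgroups of an ordered group form a chain under inclusion.

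Suppose, for contradiction, that $<$ is an ordering of $F*G$ making both canonical projections $\pi_F \colon F*G \to F$ and $\pi_G \colon F*G \to G$ order-homomorphisms, relative to $<$ and the given orderings of $F$ and $G$. Set $K_F = \ker \pi_F$ and $K_G = \ker \pi_G$. Since each projection is an order-homomorphism, both $K_F$ and $K_G$ are convex subgroups of $(F*G, <)$.

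The key input is that any two convex subgroups of an ordered group are comparable by inclusion. I would establish this in one line: given convex subgroups $C_1, C_2$ and supposed elements $a \in C_1 \setminus C_2$ and $b \in C_2 \setminus C_1$, I replace $a$ and $b$ by their inverses if necessary so that $1 < a$ and $1 < b$ (a convex subgroup is closed under inverses). Comparing $a$ and $b$, if $a \le b$ then $1 < a \le b$ with $1, b \in C_2$ forces $a \in C_2$ by convexity, a contradiction; if instead $b < a$ then $1 < b < a$ with $1, a \in C_1$ forces $b \in C_1$, again a contradiction. Hence $K_F \subseteq K_G$ or $K_G \subseteq K_F$.

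It then remains to rule out both inclusions. If $K_F \subseteq K_G$, I pick any nontrivial $g \in G \subseteq F*G$; since $\pi_F(g) = 1$ we have $g \in K_F \subseteq K_G$, whence $\pi_G(g) = g = 1$, contradicting $g \ne 1$. The inclusion $K_G \subseteq K_F$ is dispatched symmetrically using a nontrivial $f \in F$. The main (and essentially only) obstacle is the chain property of convex subgroups; everything else is immediate. As this fact is standard in the theory of ordered groups (see \cite{MR77}), one may either cite it or simply include the short argument above.
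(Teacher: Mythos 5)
Your argument is correct. Note that the paper itself supplies no proof of Proposition \ref{asymmetry_necessary}~--- it explicitly leaves the proof to the interested reader~--- so there is no official argument to compare against; what you have written is a legitimate way to fill that gap. Your route goes through the chain property of convex subgroups: both kernels $K_F = \ker\pi_F$ and $K_G = \ker\pi_G$ are convex (as the paper observes for kernels of order-homomorphisms), convex subgroups of an ordered group are nested, and either inclusion $K_F \subseteq K_G$ or $K_G \subseteq K_F$ collapses a nontrivial free factor, since $G \subseteq K_F$ and $F \subseteq K_G$ while $\pi_G$ restricts to the identity on $G$ and $\pi_F$ to the identity on $F$. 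Your one-line proof of comparability is sound; the only pedantic point is that the paper's definition of convexity uses strict inequalities, so in the case $a \le b$ you should separate $a = b$ (which already puts $a \in C_2$) from $1 < a < b$ (where convexity applies)~--- which your phrasing effectively does. For comparison, the discussion following Corollary \ref{asymmetry} suggests an even shorter direct argument that avoids convexity altogether: choose $1 <_F f$ and $1 <_G g$ and consider $x = fg^{-1} \in F*G$. By totality of the hypothesized ordering, either $x \le 1$, which forces $f = \pi_F(x) \le_F 1$, or $1 \le x$, which forces $g^{-1} = \pi_G(x) \ge_G 1$; each is a contradiction. Your proof buys a structural explanation (two convex kernels, each containing an entire free factor, cannot be comparable), while the direct one is more elementary; both are complete.
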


\section{Structure as a tensor category}\label{tensor}

Recall that $\CC$ denotes the category of ordered groups and order-preserving homomorphisms, and that $\FF : \CC \times \CC \to \CC$ is a bi-functor.  Let us rename $\FF$ as follows, for ordered groups 
$(F_0, <_{F_0})$ and $(F_1, <_{F_1})$:
$$(F_0, <_{F_0}) \otimes (F_1, <_{F_1}) := \FF((F_0, <_{F_0}), (F_1, <_{F_1})) = (F_0 * F_1, \prec)$$

It is well-known that the category of groups under free product is a tensor category, with unit the trivial group (see, for example, \cite{Mac98} or the Wikipedia entry for Monoidal Category).  I am grateful to Christian Kassel for suggesting the following to me.

\begin{thm}\label{thm-tensor}
With the bi-functor $\otimes$ the category $\CC$ is a tensor category, in other words a monoidal category.
\end{thm}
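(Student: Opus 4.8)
The plan is to realize $(\CC, \otimes)$ as a monoidal category whose forgetful functor $U \colon \CC \to \mathbf{Grp}$ (erasing the ordering) is strictly monoidal onto $(\mathbf{Grp}, *)$, the category of groups under free product with the trivial group as unit. On objects $U((F_0,<_{F_0}) \otimes (F_1,<_{F_1})) = F_0 * F_1 = U(F_0,<_{F_0}) * U(F_1,<_{F_1})$, and on morphisms $U(\phi_0 * \phi_1) = \phi_0 * \phi_1$, so $U$ carries $\otimes$ to $*$ on the nose. Since $(\mathbf{Grp},*)$ is a monoidal category whose associator $a$ and left and right unitors $\lambda, \rho$ are the canonical group isomorphisms, I would take the structural isomorphisms of $\CC$ to be these same underlying group maps. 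The whole theorem then reduces to a single claim: \emph{each canonical group isomorphism $a_{A,B,C}$, $\lambda_A$, $\rho_A$ is a morphism of $\CC$}, i.e.\ is order-preserving. Indeed, a bijective order-preserving homomorphism of ordered groups automatically has order-preserving inverse (if it reversed some pair, applying the original map would contradict totality), so order-preservation upgrades each to an isomorphism in $\CC$; and naturality of $a,\lambda,\rho$ together with the pentagon and triangle identities are then inherited for free, because $U$ is faithful: each is an equality of two composite morphisms of $\CC$ whose $U$-images already agree in $\mathbf{Grp}$.

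The unitors are immediate from Theorem \ref{main}(1). Writing $I$ for the trivial group with its unique ordering, the group $I * A$ is canonically $A$, and $\prec$ extends the given order of $A$ on the subgroup $A \hookrightarrow I * A$; since that subgroup is everything, $\prec$ \emph{equals} $<_A$ and $\lambda_A$ is literally the identity order-isomorphism. The same argument handles $\rho_A$.

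The substance is the associator. Here one must show that the two orderings on the common group $W = F_0 * F_1 * F_2$ — call them $\prec_L$ from $(A \otimes B)\otimes C$ and $\prec_R$ from $A \otimes (B \otimes C)$ — coincide, so that the canonical identification $W \to W$ is order-preserving. My first step is to reduce to the kernel of abelianization into the product. Using Theorem \ref{main}(2) together with the order-preserving factor inclusions of Theorem \ref{main}(1), the canonical inclusion of each two-factor sub-free-product $F_i * F_j$ into $W$ is order-preserving for \emph{both} $\prec_L$ and $\prec_R$; as these are the same underlying maps, $\prec_L$ and $\prec_R$ agree on every $F_i * F_j$, in particular on each $F_i$. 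Next, iterating Proposition \ref{order-homomorphism} shows that for both bracketings the canonical map $\alpha_3 \colon W \to F_0\times F_1\times F_2$ is an order-homomorphism onto the \emph{same} lexicographic order (most significant factor $F_0$, then $F_1$, then $F_2$). Hence whenever $\alpha_3(x)\neq \alpha_3(y)$ the two orders decide $x,y$ identically, and by left-invariance it remains only to compare elements against $1$ inside the convex (Corollary \ref{convex kernel}) subgroup $N = \ker \alpha_3$.

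Thus the crux — and the step I expect to be the main obstacle — is to prove that $\prec_L$ and $\prec_R$ agree on $N$. The difficulty is that $N$ is exactly where the two constructions diverge: positivity of an element of $N$ is read off from the higher-$t$-degree terms of two genuinely different matrix models, over the distinct group rings $\Z((F_0*F_1)\times F_2)$ and $\Z(F_0\times(F_1*F_2))$ and in different polynomial variables. I would attack this in one of two ways. The cleaner route is to descend a chain of convex subgroups: exhibit inside $N$ a further canonical order-homomorphism shared by $\prec_L$ and $\prec_R$, coming from the next layer of Bergman's $t$-filtration, peeling off one graded piece at a time and using that $N$ is free on the commutators $[f_i,f_j]$, each of which lives in some $F_i*F_j$ where the two orders already coincide, to anchor an induction. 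The more computational route is to compare the two matrix representations directly, tracking the lowest-degree coefficient that detects sign and checking that it transforms identically under the canonical identification; this is the calculation I would expect to be the most delicate, since it must be carried out uniformly at every level of the nested lexicographic comparison rather than only at degree zero.
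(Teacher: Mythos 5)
Your general framework is sound: transporting the monoidal structure of $(\mathrm{Groups},*)$ along the faithful forgetful functor does reduce everything to showing that the canonical associator and unitors are order-preserving, coherence then being automatic, and your treatment of the unitors via Theorem \ref{main}(1) is correct. But the entire content of the theorem is the associator claim $\prec_L\,=\,\prec_R$, and your proposal does not prove it: you explicitly label the agreement of the two orderings on $N=\ker\alpha_3$ as ``the crux'' and then only describe two strategies you \emph{would} pursue, carrying out neither. The paper disposes of this point by unwinding the two constructions and observing that the only place the bracketing enters is through the lexicographic orderings of $F_0\times(F_1\times F_2)$ and $(F_0\times F_1)\times F_2$, which coincide as the lexicographic order on triples; some such direct comparison of the two matrix models is exactly what your sketch defers.

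Moreover, the one reduction you do carry out is incorrect. Write $\prec_{01}$ for $\mathfrak{F}(<_{F_0},<_{F_1})$ and $K=\ker(\alpha\colon F_0*F_1\to F_0\times F_1)$. Proposition \ref{order-homomorphism} applied to $\prec_L$ gives an order-homomorphism onto $(F_0*F_1)\times F_2$ with the lexicographic order whose \emph{dominant} coordinate carries $\prec_{01}$; the further map $\alpha\times\mathrm{id}$ onto $F_0\times F_1\times F_2$ is \emph{not} an order-homomorphism of lexicographic orders, because the convex subgroup $K$ outranks the $F_2$-coordinate and is collapsed. Concretely, take $1\ne k\in K$ with $1\prec_{01}k$ (possible since $K$ is free on commutators) and $c'<_{F_2}c$; then $(1,c)<(k,c')$ lexicographically, so by Proposition \ref{order-homomorphism} one must have $c\prec_L kc'$, yet $\alpha_3(kc')=(1,1,c')<(1,1,c)=\alpha_3(c)$. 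Hence ``iterating Proposition \ref{order-homomorphism}'' does not make $\alpha_3$ an order-homomorphism for $\prec_L$, and the two orders are not determined by $\alpha_3$ together with their restrictions to $N$ in the way you use. The correct first-layer quotients are $(F_0*F_1)\times F_2$ for $\prec_L$ and $F_0\times(F_1*F_2)$ for $\prec_R$ --- different data --- so the comparison must be made inside the matrix construction itself (or along the full convex series), which is precisely the step missing from your argument.
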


For ordered groups $(F_0, <_{F_0}), (F_1, <_{F_1}), (F_2, <_{F_2})$, we have the isomorphism of groups
$$F_0 * (F_1 * F_2) \cong (F_0 * F_1) * F_2.$$ 
We need to check that the orderings constructed on both sides of this equivalence are the same under the isomorphism, in other words the isomorphism is order-preserving. But this follows from the observation that the lexicographic orderings on the direct products 
$F_0 \times (F_1 \times F_2)$ and $(F_0 \times F_1) \times F_2$, used in the respective orderings of 
$F_0 * (F_1 * F_2)$ and $(F_0 * F_1) * F_2,$ both reduce to the lexicographic ordering of triples.

Similarly, the coherence relations involved in tensor categories follow from the observation that for ordered groups $(F_i, <_{F_i}), 0 \le i \le 3,$ our orderings of the groups
$$(F_0 * F_1) * (F_2 * F_3),\; (F_0 * (F_1 * F_2)) * F_3,\;  F_0 * ((F_1 * F_2) * F_3),$$
$$(F_0 * F_1) * (F_2 * F_3), {\rm and \;} F_0 * (F_1 * (F_2 * F_3))  $$
are identical (under their natural isomorphisms).

%


\section{An application to braid groups}

The original motivation for this study is the following application to the theory of braids.
The braid group $B_n$ acts by automorphisms on the free group $\F_n$, as observed by Artin \cite{Artin25, Artin47}.  Free groups are orderable, and we may call a braid ``order-preserving'' if its image under the (faithful) Artin representation $B_n \to Aut(\F_n)$ preserves {\em some} ordering of $\F_n$ (see  \cite{KR16}).  In that paper it is noted that a braid is order-preserving if and only if the complement of the link in $S^3$ consisting of the braid's closure, plus the braid axis, has orderable fundamental group.  It is used to show, for example, that of the two minimal volume orientable hyperbolic 2-cusped 3-manifolds, one has orderable fundamental group, while the group of the other is not orderable (although it is left-orderable). 

\begin{center}
\begin{figure}[h]
\includegraphics[width=3in]{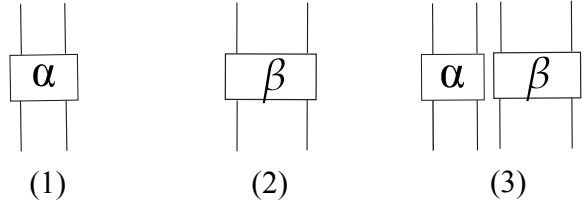}
\caption{(1) $\alpha \in B_m$. 
(2)  $\beta \in B_n$. 
(3) $\alpha \otimes \beta \in B_{m+n}$.}
\label{fig_tensorProduct}
\end{figure}
\end{center}

Multiplication of braids is by concatenation, and the product of two order-preserving braids need not be order-preserving, as observed in  \cite{KR16}.   There is also a tensor product operation $\otimes: B_m \times B_n \to B_{m+n}$ which forms an $m+n$ strand braid $\a \otimes \b$ from an $m$-braid $\a$ and an $n$-braid $\b$ by placing them side by side with no crossing between the strands of $\a$ and those of $\b$, as in Figure \ref{fig_tensorProduct}.  See for example \cite{KT08}, p. 69.

It is easy to see from the definition of Artin's representation that the automorphism of $\F_{m+n} \cong \F_m * \F_n$ corresponding to $\a \otimes \b$ is just the free product of the automorphisms corresponding to $\a$ and $\b$.  

\begin{cor}
The tensor product $\a \otimes \b$  of braids is order-preserving if and only if both $\a$ and $\b$ are order-preserving braids.
\end{cor}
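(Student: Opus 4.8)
The plan is to reformulate the corollary in terms of free-product automorphisms and then read off both implications from results already in hand. Write $\hat\a \in \mathrm{Aut}(\F_m)$ and $\hat\b \in \mathrm{Aut}(\F_n)$ for the automorphisms associated to $\a$ and $\b$ under the Artin representation. The observation immediately preceding the corollary identifies the automorphism of $\F_{m+n} \cong \F_m * \F_n$ associated to $\a \otimes \b$ with the free product $\hat\a * \hat\b$. Since a braid is order-preserving precisely when its associated automorphism preserves some ordering of the relevant free group, the corollary becomes the purely group-theoretic statement that $\hat\a * \hat\b$ preserves some ordering of $\F_m * \F_n$ if and only if $\hat\a$ and $\hat\b$ each preserve some ordering of their respective free factors.

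For the ``if'' direction I would appeal directly to Corollary \ref{automorphisms}. Suppose $\hat\a$ preserves an ordering $<_F$ of $\F_m$ and $\hat\b$ preserves an ordering $<_G$ of $\F_n$. Forming the ordered group $(\F_m * \F_n, \prec) = \FF((\F_m, <_F), (\F_n, <_G))$, Corollary \ref{automorphisms} asserts that the automorphism $\hat\a * \hat\b$ preserves $\prec$. Thus $\hat\a * \hat\b$ preserves at least one ordering, and $\a \otimes \b$ is order-preserving.

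For the ``only if'' direction, suppose $\a \otimes \b$ is order-preserving, so that $\hat\a * \hat\b$ preserves some ordering $\ll$ of $\F_m * \F_n$. The point to exploit is that, by the universal property defining the free product of automorphisms, $\hat\a * \hat\b$ stabilizes each free factor and restricts on $\F_m$ to $\hat\a$ and on $\F_n$ to $\hat\b$. Restricting $\ll$ to the subgroup $\F_m$ gives an ordering of $\F_m$; because $\hat\a * \hat\b$ carries $\F_m$ into itself and respects $\ll$, its restriction $\hat\a$ respects this restricted ordering, so $\a$ is order-preserving. The identical argument with $\F_n$ shows $\b$ is order-preserving.

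I do not expect a genuine obstacle here: the ``if'' direction is precisely the functoriality packaged in Corollary \ref{automorphisms}, and the ``only if'' direction rests only on the elementary facts that the restriction of a group ordering to a subgroup is again an ordering and that an order-preserving automorphism stabilizing a subgroup restricts to an order-preserving automorphism on it. The one place to be slightly careful is verifying this restriction behaviour for $\hat\a * \hat\b$ on the free factors, which is immediate from the construction of the free product of automorphisms.
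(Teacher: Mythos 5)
Your proposal is correct and follows essentially the same route as the paper: the ``if'' direction invokes Corollary \ref{automorphisms} applied to the ordering $\prec$ built from orderings preserved by $\a$ and $\b$, and the ``only if'' direction restricts the preserved ordering of $\F_{m+n}\cong\F_m*\F_n$ to the free factors, which are stabilized by $\hat\a*\hat\b$. No substantive differences.
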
 

\begin{proof}
One direction follows from Corollary \ref{automorphisms}.  For if $\a$ and $\b$ preserve some orderings of $\F_m$ and $\F_n$ respectively, then $\a \otimes \b$ preserves the corresponding ordering $\prec$ of $\F_m * \F_n \cong \F_{m+n}$.
On the other hand, suppose $\a \otimes \b$ preserves an ordering of $\F_{m+n} \cong \F_m * \F_n $  Considering $\F_m$ and $\F_n$ as the natural subgroups of $\F_m * \F_n $, we see that the action of  $\a \otimes \b$  leaves each of these subgroups invariant.  Therefore the ordering of $\F_{m+n}$ preserved by $\a \otimes \b$ restricts to each of the subgroups making the action of the braids $\a$ and $\b$ order-preserving. 
\end{proof}

We note the multiple use of the tensor product symbol.  Indeed, let us say that the ordered free group 
$(\F_n, <)$ {\em represents} the braid $\b \in  B_n$ if the automorphism of $\F_n$ corresponding to $\b$ under the Artin representation preserves the ordering $<$.  We have observed the following.

\begin{prop}
If $(\F_m, <)$ represents $\a \in B_m$ and $(\F_n, <')$ represents $\b \in B_n$, then 
$(\F_m, <) \otimes (\F_n, <')$ represents $\a \otimes \b \in B_{m+n}$.
\end{prop}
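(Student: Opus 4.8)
The plan is to reduce the statement directly to Corollary \ref{automorphisms}. First I would unwind the definition of ``represents'': to say that $(\F_m, <)$ represents $\a$ means precisely that the automorphism $\phi_\a \in \mathrm{Aut}(\F_m)$ assigned to $\a$ by the Artin representation preserves the ordering $<$, and similarly $(\F_n, <')$ represents $\b$ means that the corresponding automorphism $\phi_\b \in \mathrm{Aut}(\F_n)$ preserves $<'$. After this unwinding, the goal becomes to show that the automorphism of $\F_{m+n} \cong \F_m * \F_n$ assigned to $\a \otimes \b$ preserves the ordering $\prec$ of $(\F_m, <) \otimes (\F_n, <') = (\F_m * \F_n, \prec)$.

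Next I would invoke the observation recorded just above the unnamed corollary: it follows from the definition of Artin's representation that the automorphism of $\F_{m+n}$ corresponding to $\a \otimes \b$ is exactly the free product automorphism $\phi_\a * \phi_\b$ of $\F_m * \F_n$. This is the one genuinely geometric input, and I would make it precise by examining how $\a \otimes \b$ acts on the standard free generators of $\F_{m+n}$: the side-by-side juxtaposition keeps the first $m$ generators (on which $\a$ acts) and the last $n$ generators (on which $\b$ acts) in separate blocks, with no crossings between the blocks, so the two actions do not interact and assemble into $\phi_\a * \phi_\b$.

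With this identification in hand, the conclusion is immediate from Corollary \ref{automorphisms}, applied with $F = \F_m$, $G = \F_n$, $\phi = \phi_\a$, and $\psi = \phi_\b$: since $\phi_\a$ and $\phi_\b$ preserve the given orderings $<$ and $<'$, the free product automorphism $\phi_\a * \phi_\b = \phi_{\a \otimes \b}$ preserves the ordering $\prec$ on $(\F_m, <) \otimes (\F_n, <')$. By the definition of ``represents,'' this is exactly the assertion that $(\F_m, <) \otimes (\F_n, <')$ represents $\a \otimes \b$.

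I do not anticipate a serious obstacle; the content is essentially the forward direction of the corollary preceding the proposition, repackaged to track which specific orderings are used. The only point deserving genuine care is the verification that $\phi_{\a \otimes \b} = \phi_\a * \phi_\b$, together with the check that the free product decomposition $\F_{m+n} \cong \F_m * \F_n$ used in forming $(\F_m, <) \otimes (\F_n, <')$ is the very same one under which $\phi_\a * \phi_\b$ is defined, so that Corollary \ref{automorphisms} applies verbatim.
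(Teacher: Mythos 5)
Your proposal is correct and matches the paper's reasoning exactly: the paper states this proposition as an immediate consequence of the earlier observation that the Artin automorphism of $\F_{m+n} \cong \F_m * \F_n$ corresponding to $\a \otimes \b$ is the free product $\phi_\a * \phi_\b$, combined with the forward direction of the preceding corollary, i.e.\ Corollary \ref{automorphisms}. Your extra care about identifying the free product decomposition and verifying $\phi_{\a \otimes \b} = \phi_\a * \phi_\b$ on generators only makes explicit what the paper leaves as "easy to see."
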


\section{Continuity}\label{continuity}

The goal of this section is to establish that our construction is continuous in an appropriate sense.  
If $O(G)$ denotes the set of all (two-sided invariant) orderings of the group $G$, there is a natural topology on $O(G)$, defined below.  Given orderable groups $F$ and $G$, the construction defined in Section \ref{constructing} can be considered a function whose input is a pair of orderings $<_F$ and $<_G$ and the output is an ordering $\prec$ of $F * G$, in other words a function 
$O(F) \times O(G) \to O(F * G)$.  We'll see that it is both continuous and injective.
 
\subsection{The space of orderings}

The set of orderings $O(G)$ of the group $G$ is endowed with a natural topology, as detailed by Sikora \cite{Sikora04}.  Consider a specific ordering $<_G$ of $G$, and choose a {\em finite} number of inequalities among elements of $G$ which are satisfied using $<_G$.
Then a basic neighbourhood of $<_G$ consists of all orderings of $G$ for which all those inequalities remain true.  Neighbourhoods of this type form a basis for the topology we are considering.  Equivalently, a neighbourhood of $<_G$ is defined by choosing some finite set of elements of $G$ which are positive (greater than the identity) using $<_G$.  Then take the neighbourhood to consist of all orderings of $G$ under which that finite 
set remains positive.

It is known, and not difficult to show, that $O(G)$ is compact and totally disconnected.  An isolated point of $O(G)$ is an ordering which is ``finitely determined'' in the sense that it is the only ordering of $G$ for which some finite set of inequalities holds.  Sikora \cite{Sikora04} showed that for $n \ge 2, O(\Z^n)$ has no isolated points, and is homeomorphic with the Cantor set.  Whether $O(\F_n)$ has isolated points, for the free group 
$\F_n, n \ge 2$, is an open question at this writing.  

\subsection{Continuity of lexicographic ordering of direct products.}

As a warmup to our main result, we consider the lexicographic ordering of direct products $F \times G$ of ordered groups, as discussed in Section \ref{constructing}  (similar results would hold for the reverse lex ordering).  It may be considered a function
$$\mathfrak{L} \colon O(F) \times O(G) \to O(F \times G).$$

\begin{prop}\label{lex continuous} 
$\mathfrak{L}$ is continuous and injective.
\end{prop}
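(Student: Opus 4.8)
The plan is to verify the two properties of $\mathfrak{L}$ separately, since they are logically independent. For \emph{injectivity}, I would argue that the pair of input orderings can be recovered from the output ordering $\mathfrak{L}(<_F, <_G)$. Indeed, the lexicographic ordering restricts to $<_F$ on the subgroup $F \times \{1\}$ and to $<_G$ on $\{1\} \times G$, since $(f,1) < (f',1)$ lexicographically exactly when $f <_F f'$, and similarly for the second factor. Thus if $\mathfrak{L}(<_F, <_G) = \mathfrak{L}(<_F', <_G')$, restricting to these two subgroups forces $<_F = <_F'$ and $<_G = <_G'$, giving injectivity immediately.

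For \emph{continuity}, I would work directly from Sikora's subbasis description of the topology recalled above: a subbasic open set in $O(F \times G)$ is determined by declaring a single element $(f,g) \in F \times G$ to be positive. So it suffices to show that for each fixed $(f,g)$, the set of pairs $(<_F, <_G)$ for which $(f,g)$ is positive in the lexicographic order is open in $O(F) \times O(G)$. The plan is to split into cases according to the structure of the lexicographic comparison of $(f,g)$ with the identity $(1,1)$. The element $(f,g)$ is lex-positive precisely when either $1 <_F f$, or else $f = 1$ and $1 <_G g$. In the first case ($f \neq 1$), the condition $1 <_F f$ defines a subbasic open set in $O(F)$ and does not constrain $<_G$ at all, so its preimage is open. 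In the degenerate case $f = 1$, the condition reduces to $1 <_G g$, a subbasic open set in $O(G)$, again with no constraint on $<_F$. Either way the preimage of the subbasic set is a product of an open set with a full factor, hence open, establishing continuity.

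I expect the main subtlety—though not a serious obstacle—to be bookkeeping around the degenerate comparisons, namely the cases where $f = 1$ or $g = 1$, since the lexicographic rule branches on the equality $f = 1$. One must be careful that the condition ``$(f,g)$ is lex-positive'' is genuinely expressed as a positivity condition on one coordinate only after fixing which branch applies, and that fixing the branch (e.g. asserting $f \neq 1$) is a purely algebraic, not topological, constraint on the fixed element $(f,g)$, so it introduces no closed conditions on the orderings. Once this is handled, continuity follows formally from the fact that the preimage of every subbasic open set is open, which is all one needs to check for continuity into a space with a specified subbasis.

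Since this proposition is a warmup for the main continuity result, I would keep the argument short and emphasize the two structural facts being used: that lexicographic positivity of a \emph{single} element depends on only one coordinate ordering at a time, and that restriction to the canonical subgroups recovers the factor orderings. These same two ideas—reducing to positivity of one generator and exploiting the subgroup restrictions from Theorem~\ref{main}(1)—should then reappear in the proof of continuity and injectivity for the free-product construction $\mathfrak{F}$.
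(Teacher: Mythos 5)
Your proposal is correct and follows essentially the same route as the paper: injectivity comes from recovering the factor orderings (the paper exhibits a distinguishing element $(f,g)$ or $(1,g)$, which is the same idea as your restriction to $F\times\{1\}$ and $\{1\}\times G$), and continuity comes from translating lex-positivity of an element of $F\times G$ into a positivity condition on a single coordinate, split according to whether the first coordinate is trivial. The only cosmetic difference is that you check continuity one subbasic set at a time, while the paper handles a whole basic neighbourhood (a finite list of positive elements) at once; these are equivalent.
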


\begin{proof} We may assume both $F$ and $G$ are nontrivial groups; otherwise there is nothing to prove.
For injectivity, suppose $<_F$ and $<_F'$ are orderings of $F$ and that $<_G$ and $<_G'$ are orderings of $G$.  Consider 
$< \; = \mathfrak{L}(<_F, <_G)$ and $<' \; = \mathfrak{L}(<_F', <_G')$.  If $<_F$ and $<_F'$ are distinct, there must be an element
$f \in F$ with $1 <_F f$ but $f <_F' 1$.  Then we have, for any $g \in G$, that $1 < (f, g)$ and $(f, g) <' 1$.  It follows that $<$ and $<'$ are distinct.  Similarly, if  $<_G$ and $<_G'$ are different, then one can find an element $(1, g) \in F * G$ with $(1, g)$ having different signs relative to the orderings $<$ and $<'$.  This establishes injectivity.
  
To establish continuity, note that
a basic neighbourhood $\mathfrak{N}_<$ of $<$ in $O(F \times G)$ is defined by choosing some finite set of positive elements: 
$$(f_1, g_1), \dots, (f_k, g_k), (1, g_{k+1}), \dots (1, g_{k+l}).$$
Here we have 
$$1<_F f_1, \dots,  1 <_F f_k \quad {\rm and} \quad1 <_G g_{k+1}, \dots , 1 <_G g_{k+l},$$ 
whereas some of the list $g_1, \dots, g_k$ may be negative in the ordering $<_G$.  Possibly $k = 0$ or $l = 0$.

Continuity will be established if we can find neighbourhoods $\mathfrak{N}_{<_F}$ of $<_F$ in $O(F)$ and $\mathfrak{N}_{<_G}$ of $<_G$ in $O(G)$ so that 
$\mathfrak{L}(\mathfrak{N}_{<_F} \times \mathfrak{N}_{<_G}) \subset \mathfrak{N}_<$.  But this is straightforward: take 
$\mathfrak{N}_{<_F}$ to be the set of all orderings of $F$ for which $f_1, \dots f_k$ are positive, and 
$\mathfrak{N}_{<_G}$ the set of all orderings of $G$ under which $g_{k+1}, \dots, g_{k+l}$ are positive.
\end{proof}

\subsection{Continuity of the ordering of free products}
Recalling the construction in Section \ref{constructing}, we defined a function of ordered groups:
$${\mathfrak F }((F, <_F) , (G, <_G)) = (F * G, \prec).$$

By abuse of notation, if $F$ and $G$ are fixed, but orderings thereof are variable, we may write 
$$\mathfrak{F} (<_F, <_G) = \; \prec .$$
Then we have a function of spaces of orderings:
$$\mathfrak{F} \colon O(F) \times O(G) \to O(F * G)$$

\begin{thm}\label{cont and inj}
$\mathfrak{F}$ is continuous and injective.
\end{thm}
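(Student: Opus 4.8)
The plan is to prove continuity and injectivity separately, in both cases reducing the statement about the ordering $\prec$ of $F*G$ to statements about the lexicographic ordering of the group ring $R = \Z(F \times G)$ and hence, ultimately, to the component orderings $<_F$ and $<_G$. The guiding principle is that $\prec$ is defined entirely through the sign of the single group-ring element that appears as the first nonzero matrix entry of $\rho(y) - \rho(x)$, and that this sign is in turn governed by the lexicographic ordering of $F \times G$, which we have already shown varies continuously and injectively via $\mathfrak{L}$ in Proposition \ref{lex continuous}.

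For injectivity, suppose $(<_F, <_G) \ne (<_F', <_G')$, and without loss of generality $<_F \ne <_F'$. As in the proof of Proposition \ref{lex continuous}, pick $f \in F$ with $1 <_F f$ but $f <_F' 1$. Viewing $f$ as an element of $F \subset F*G$, part (1) of Theorem \ref{main} tells us that $\prec$ extends $<_F$ and $\prec'$ extends $<_F'$, so $1 \prec f$ while $f \prec' 1$. Hence the two orderings of $F*G$ disagree on the element $f$, so $\mathfrak{F}(<_F,<_G) \ne \mathfrak{F}(<_F',<_G')$. The case $<_G \ne <_G'$ is identical using the subgroup $G$. This step is short because the extension property does all the work.

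For continuity, fix $\prec \, = \mathfrak{F}(<_F,<_G)$ and take a basic neighbourhood $\mathfrak{N}_\prec$ determined by finitely many elements $w_1, \dots, w_m \in F*G$ declared positive, i.e.\ $1 \prec w_j$ for each $j$. For each $w_j$ the condition $1 \prec w_j$ says that $\rho(w_j) - \rho(1) = \rho(w_j) - I$ is positive in $M_2(R[t])$, which by the recipe in Section \ref{constructing} amounts to a single inequality $0 < c_j$ in the ordered ring $R = \Z(F\times G)$, where $c_j$ is the first nonzero matrix entry (at the lowest power of $t$) of $\rho(w_j)-I$. Positivity of $c_j$ in $R$ means the coefficient of its $<$-largest term is a positive integer, where $<$ is the lexicographic order $\mathfrak{L}(<_F,<_G)$ on $F\times G$. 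The key point is that $c_j$ is a \emph{fixed} element of $R$ depending only on $w_j$ (not on the ordering), so its expansion involves only finitely many elements of $F\times G$; positivity of $c_j$ therefore reduces to finitely many inequalities in $F \times G$, each of which, by the injectivity/continuity argument for $\mathfrak{L}$, reduces to finitely many inequalities among elements of $F$ and of $G$ individually. Collecting these finitely many inequalities over all $j$ defines neighbourhoods $\mathfrak{N}_{<_F}$ of $<_F$ and $\mathfrak{N}_{<_G}$ of $<_G$, and any pair of orderings in $\mathfrak{N}_{<_F} \times \mathfrak{N}_{<_G}$ will keep every $c_j$ positive and hence every $w_j$ $\prec$-positive, giving $\mathfrak{F}(\mathfrak{N}_{<_F} \times \mathfrak{N}_{<_G}) \subset \mathfrak{N}_\prec$.

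The main obstacle I anticipate is the bookkeeping in the continuity step: one must verify carefully that the finitely many elements of $F \times G$ entering the expansion of each $c_j$ are determined by $w_j$ alone, and that the comparison selecting the $<$-largest term among them depends only on finitely many pairwise lexicographic comparisons — so that as the ordering varies within a small enough neighbourhood, both the identity of the leading term and the sign of its coefficient are preserved. Once one grants that a single positivity test in $R$ is an \emph{open} condition in $\mathfrak{L}(<_F,<_G)$, and that $\mathfrak{L}$ is continuous, the rest is routine. I would therefore structure the proof so that the crux is an explicit lemma: for a fixed nonzero $r \in R$, the set of pairs $(<_F,<_G)$ for which $r$ is positive in the induced lexicographic order on $R$ is open, being cut out by finitely many inequalities in $F$ and $G$.
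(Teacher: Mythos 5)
Your proposal is correct and follows essentially the same route as the paper: injectivity via the extension property of $\prec$ over $<_F$ and $<_G$, and continuity by reducing positivity of each $w_j$ to positivity of a fixed leading group-ring element $c_j$, hence to finitely many lexicographic inequalities in $F\times G$ and finally to finitely many inequalities in $F$ and $G$ via Proposition \ref{lex continuous}. Your closing ``explicit lemma'' is precisely the paper's observation that the matrix step $\mathfrak{M}\colon O(F\times G)\to O(F*G)$ is continuous, so that $\mathfrak{F}=\mathfrak{M}\circ\mathfrak{L}$.
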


\begin{proof} One may prove injectivity as in Proposition \ref{lex continuous}; we leave the details to the reader.  Note also that we proved continuity of the map $\mathfrak{L}$ by showing that any finite set of inequalities in $F \times G$ would be implied (under 
$\mathfrak{L}$) by finitely many inequalities in $F$ and in $G$.

We will argue similarly in this case; we'll try to avoid excessive notation and sketch the ideas.  Suppose $<_F$ and $<_G$ are given orderings of $F$ and $G$, respectively, and that $\prec \; = \mathfrak{F}(<_F, <_G)$ is the corresponding ordering of the free product $F * G$.  A neighbourhood $\mathfrak{N}_{\prec}$ of $\prec$ in the space $O(F * G)$ consists of all orderings of $F * G$ for which all members of some finite set $x_1, \dots, x_k$ of elements of $F * G$ are positive, where $1 \prec x_i$ for $i = 1, \dots, k$.  But note that $1 \prec x_i$ is equivalent to the matrix $\rho(x_i) - \rho(1)$ being positive in $M_2(\Z(F \times G)[t])$, and this is positive if the first nonzero entry of that matrix, expanded in powers of $t$, is positive.  That entry, an element of $\Z(F \times G)$, is positive if the coefficient of its greatest group element, say $(f_i, g_i)$, is a positive integer.  
But the condition that $(f_i, g_i)$ is the greatest group element appearing in that entry is equivalent to a finite number of inequalities in $F \times G$, using the lexicographic ordering.  This in turn, as in Proposition  \ref{lex continuous}, is implied by a finite number of inequalities in $F$ and $G$ which are in particular satisfied using the orderings $<_F$ and $<_G$.  Using the open neighbourhoods $\mathfrak{N}_{<_F}$ of $<_F$ and $\mathfrak{N}_{<_G}$ of $<_G$ defined by those inequalities, we see that 
$\mathfrak{F}(\mathfrak{N}_{<_F}, \mathfrak{N}_{<_G}) \subset \mathfrak{N}_{\prec}$, which establishes continuity of $\mathfrak{F}.$
\end{proof}

Suppose, in the procedure for defining $\prec$ in Section \ref{constructing}, one used some ordering of $F \times G$ other than the lexicographic one, but otherwise defined $\prec$ in the same way.  This then defines a function $O(F \times G) \to O(F * G)$, which we will call $\mathfrak{M}$, short for matrix construction.  The proof of Theorem \ref{cont and inj} actually shows that $\mathfrak{M}$ is continuous.  Our specific construction 
$\mathfrak{F}$ may therefore be considered  a composite 
$$O(F) \times O(G) \xrightarrow{\mathfrak{L}} O(F \times G) \xrightarrow{\mathfrak{M}} O(F * G)$$
of two continuous functions, both injective.

\section{Free product of arbitrarily many ordered groups}\label{arbitrary index}

We now consider an arbitrary collection of ordered groups.  For convenience, we assume the groups are indexed by an ordinal number $\c$ and denote the collection by $\{(F_\a, <_{F_\a})\}_{\a < \c}.$  So far we have been considering the case $\c = 2$.

\begin{thm}\label{many groups}
Let $\c \ge 2$ be an ordinal.  Suppose $\{(F_\a, <_{F_\a})\}_{\a < \c}$ is a collection of ordered groups and let $F := *_{\a<\c} F_\a$ denote the free product.  Then there is an ordering $\prec_F$ of $F$, so that $(F, \prec_F)$ is an ordered group, denoted
$\mathfrak{F}(\{(F_\a, <_{F_\a})\}_{\a < \c}) :=  (F, \prec_F)$, and such that the following hold:

(1) For each $\a < \c$ the restriction of $\prec_F$ to the natural subgroup $F_\a$ of $F$ equals $<_{F_\a}$.

(2) If $\{(G_\a, <_{G_\a})\}_{\a < \c}$ is another collection of ordered groups with $G := *_{\a < \c}G_\a$ and 
$$(G, \prec_G) = \mathfrak{F}(\{(G_\a, <_{G_\a})\}_{\a < \c}),$$ then for any collection $\phi_\a : F_\a \to G_\a$ of homomorphisms defined for all $\a<\c$ and which are order-preserving, relative to $<_{F_\a}$ and $<_{G_\a}$, the free product homomorphism 
$*_{\a<\c}\phi_\a :  F  \to G$ is order-preserving, relative to $\prec_F$ and $\prec_G.$

\end{thm}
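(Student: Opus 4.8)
The plan is to generalize the construction and proof strategy from the $\c = 2$ case, replacing lexicographic orderings of a product of two groups by lexicographic orderings of a product indexed by the ordinal $\c$, and replacing the $2 \times 2$ matrix embedding by an embedding into matrices indexed by $\c$ (or, more robustly, by iterating the binary construction transfinitely). I would first set up the base ring $R = \Z(\prod_{\a < \c} F_\a)$, which is an ordered ring because $\prod_{\a < \c} F_\a$ is an orderable group (ordered lexicographically using the ordinal index $\c$, which is a well-order, so the lexicographic comparison ``first coordinate where they differ'' is well-defined and total) and integral group rings of orderable groups have no zero divisors. The embedding of $F = *_{\a < \c} F_\a$ into a matrix ring over $R[t]$ would be built by assigning to each factor $F_\a$ a suitable ``Bergman-type'' embedding into $M_2(R[t])$ — or, for arbitrary $\c$, by embedding into $\mathrm{GL}_2$ over a ring with enough algebraically independent transcendentals, one per factor, so that the ping-pong argument of Proposition~\ref{injective} still separates reduced words.

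Concretely, I would proceed as follows. \textbf{Step 1:} Order the direct product $\prod_{\a < \c} F_\a$ lexicographically with respect to the well-ordering of the index set $\c$, and make $R = \Z(\prod_{\a < \c} F_\a)$ into an ordered ring by the leading-coefficient rule exactly as in Section~\ref{constructing}. \textbf{Step 2:} Produce a faithful representation $\rho \colon F \to M_2(R[t_\a : \a < \c])$ (polynomials in one indeterminate $t_\a$ per factor), sending $F_\a$ by a conjugated diagonal embedding analogous to the two displayed matrices, and verify faithfulness by the same ping-pong partition of column vectors by degree; the leading term of $\rho(x)$ is the diagonal matrix recording $\alpha(x) \in \prod_\a F_\a$, as in Proposition~\ref{order-homomorphism}. \textbf{Step 3:} Order the matrix ring by the same ``first nonzero entry in lowest-degree coefficient is positive'' recipe and define $x \prec_F y \iff \rho(y) - \rho(x)$ is positive. \textbf{Step 4:} Verify that $\prec_F$ is a bi-invariant total order (the constant term of every element of $\rho(F)$ is a positive diagonal matrix, so left/right multiplication preserves positivity, verbatim as before), establishing property~(1) by restricting to a single factor.

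For property~(2), the argument is purely functorial and transfers with no new ideas: the collection $\{\phi_\a\}$ induces an order-preserving homomorphism $\prod_\a F_\a \to \prod_\a G_\a$ of lexicographically ordered products (this is where I would invoke the $\c$-indexed analogue of the injectivity-side reasoning in Proposition~\ref{lex continuous}), hence a ring homomorphism $R_F \to R_G$ carrying positive elements to positive elements, hence $M_2(R_F[\cdots]) \to M_2(R_G[\cdots])$ carrying positive matrices to positive matrices, and the commuting square relating $\rho$, $*_\a \phi_\a$, and $\prod_\a \phi_\a$ gives that $*_\a \phi_\a$ is order-preserving exactly as in the proof of Theorem~\ref{main}. \textbf{The main obstacle} I anticipate is Step~2 for infinite $\c$: Bergman's two-matrix trick is tailored to two factors, so I must either cite the general $n$-fold (and transfinite) version of his faithful matrix representation, or — cleaner — define $\prec_F$ by transfinite recursion/direct-limit over initial segments of $\c$, ordering $*_{\a < \b} F_\a$ for each $\b \le \c$ compatibly and checking that the coherence built into Section~\ref{tensor} guarantees the orderings agree on the nested subgroups so the colimit order is well-defined; verifying totality of $\prec_F$ at limit ordinals (every reduced word lives in some $*_{\a < \b} F_\a$ with $\b < \c$ a successor or limit, since a reduced word has finite length and hence mentions only finitely many factors) is the point that needs genuine care but is ultimately routine because each element of $F$ involves only finitely many of the $F_\a$.
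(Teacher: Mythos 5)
Your proposal hedges between two routes, and only the second one works. The primary plan (Steps 1--4: one big group ring $\Z(\prod_{\alpha<\gamma}F_\alpha)$, a single representation into $2\times 2$ matrices with one indeterminate $t_\alpha$ per factor, and the ping-pong of Proposition \ref{injective}) has a genuine gap at Step 2 as soon as $\gamma\ge 3$: Bergman's trick uses exactly two matrix ``shapes'' --- the upper-triangular conjugated embedding $f\mapsto\bigl(\begin{smallmatrix} f & (f-1)t\\ 0 & 1\end{smallmatrix}\bigr)$ and its lower-triangular mirror --- and there is no third shape to give a third factor. If you assign the same triangular type to two different factors, even with distinct indeterminates, those two factors land in a common group of triangular matrices, which is metabelian, so the representation cannot be faithful on their free product; and the degree partition $V_1\sqcup V_2\sqcup V_3$ of column vectors has no room for a third player. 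Making this route work requires something genuinely different (e.g.\ Vinogradov's infinite upper-triangular matrices, which the paper deliberately avoids), not a routine adaptation. A smaller issue on the same branch: your argument for property (2) is phrased entirely in terms of this matrix construction, so it evaporates along with Step 2.

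Your acknowledged fallback --- transfinite recursion over initial segments --- is exactly the paper's proof: at a successor $\gamma=\beta+1$ apply the binary functor of Theorem \ref{main} to $\bigl(*_{\alpha<\beta}F_\alpha\bigr)*F_\beta$, and at a limit ordinal define the order on the union using that every element involves only finitely many factors. The one thing to do more carefully than you indicate is the compatibility of the orderings on nested subgroups: the paper does not get this from the coherence of Section \ref{tensor} (which only compares finitely many reassociations) but builds it into the induction as an explicit extra hypothesis, namely that $\prec_\gamma$ restricted to $*_{\alpha<\beta}F_\alpha$ equals $\prec_\beta$ for all $1<\beta<\gamma$; this is what makes the limit-ordinal definition independent of the choice of $\beta$, and property (2) must likewise be carried along as part of the inductive statement rather than proved once via a single commuting square. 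With those adjustments your fallback is a correct outline of the paper's argument.
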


\begin{proof} We will define the ordering of $F$ by induction, possibly transfinite.  For that reason, we'll call the ordering $\prec_\c$ and only later call it $\prec_F$ also.  The base for the induction, for $\c = 2$, is Theorem \ref{main}, taking $\prec_2$ to be the ordering $\prec$ defined there.
For induction we may assume that orderings $\prec_\b$ have been defined for all the groups $*_{\a<\b} F_\a$ for all $1 < \b < \c$, and that they satisfy (1) and (2) with $\b$ replacing $\c$.   Note that $*_{\a<\b} F_\a$ is naturally a subgroup of $*_{\a<\c} F_\a$.  To facilitate the induction, we'll prove that in addition to properties (1) and (2) of the theorem, $\prec_\c$ further satisfies:

(3) Whenever $1 < \b < \c$ the restriction of the ordering $\prec_\c$ to  $*_{\a<\b} F_\a$ coincides with $\prec_\b$.

Again, by Theorem \ref{main} this is satisfied for the base case $\c = 2$.  To construct $\prec_\c$ we consider two cases.  
Case 1: $\c$ is a successor ordinal: $\c = \b + 1$.  Since $\prec_\b$ is by hypothesis already defined, and noting that $F$ can be naturally identified with  $(*_{\a<\b} F_\a)*F_\b$, we use the functor $\mathfrak{F}$ defined in the proof of Theorem \ref{main} and take
$$(F, \prec_\c) \cong ((*_{\a<\b} F_\a)*F_\b, \prec_\c) := \mathfrak{F}((*_{\a<\b} F_\a), \prec_\b), (F_\b, <_\b)).$$ 

Case 2: $\c$ is a limit ordinal.  Then the group $*_{\a<\c} F_\a$ is the union of its subgroups $*_{\a<\b} F_\a$ with $\b < \c$.  Thus to compare two group elements $x, y$ in $*_{\a<\c} F_\a$, choose $\b < \c$ for which $x, y \in *_{\a<\b} F_\a$ and define $x \prec_\c y$ iff
$x \prec_\b y$.  By property (3) which may be assumed for ordinals less than $\c$, this does not depend on choice of $\b$.

In either case, it is routine to verify that the ordering $\prec_\c$ (also called $\prec_F$) satisfies the conditions (1), (2) and (3).
\end{proof}

\section{Left-ordered groups}  An ordering $<$ of the elements of a group $G$ is a left-ordering if for all 
$f, g, h \in G$ one has $g < h \implies fg < fh$; in this case we call $(G, <)$ a left-ordered group.  It is much easier than the ordered case to see that the free product of left-ordered groups is left-orderable.  For left-ordered groups 
$(F, <_F)$ and $(G, <_G)$  consider the short exact sequence
$$1 \to K \to F * G \to F \times G \to 1,$$
where $F*G \to F \times G$ is the canonical homomorphism.  The kernel $K$ is a free group, which is orderable, and one can left-order $F \times G$, lexicographically.  Since left-orderability (unlike orderability) is always preserved under extensions, we conclude that $F * G$ is left-orderable.

On the other hand, our construction of the ordering $\prec$ for the free product of ordered groups may be revised in a straightforward way to the left-ordered (or right-ordered) situation.  One must be a bit careful.  For a left-ordered group $(G, <)$ the group ring $\Z(G)$ is not, strictly speaking, an ordered ring by our definition.  For example if we have $g, g', h \in G$ with $g < g'$ but $gh > g'h$ then the ring elements
$g' - g$ and $h$ are positive, whereas their product $g'h - gh$ is not positive.  However the product in the other order, $hg' - hg,$ is necessarily positive, and more generally a positive element of $\Z(G)$ multiplied on the left by a monomial with positive coefficient remains positive.  This is enough to establish left-invariance of $\prec$ in the proof of Theorem \ref{main}.

Therefore, we conclude that all the results above remain true if ``ordered'' is replaced by 
``left-ordered" throughout.  In particular, the category of left-ordered groups and order-preserving homomorphisms is also a tensor category using our functorial construction.

\section{Concluding remarks}

The ordering we construct is by no means canonical; for example other choices of ordering the direct product, or the entries of matrices, can lead to a different ordering of the free product which satisfies the conditions of  Theorem \ref{main}, and even defines a tensor category structure.  Indeed, Corollary \ref{asymmetry} reveals the asymmetry of the construction.  In a real sense, the first group in the free product of two groups is treated preferentially in our construction.  It could as well have been the reverse.

The argument given here does not extend to the larger category of ordered groups and order-homomorphisms (which are not necessarily injective) as some positive matrix entries may be mapped to zero under such a map.  Extending our results to this category seems to be an open question.

As noted in \cite{Bergman90}, much of this can be done in the more general setting of ordered semigroups; see also \cite{Johnson68}.  We leave such generalization for the interested reader to contemplate.

\bibliographystyle{amsplain}
\bibliography{CoproductOrdering}

\end{document}